\theoremstyle{plain}
\theoremstyle{plain}
\newtheorem{theorem}{Theorem}
\theoremstyle{definition}
\theoremstyle{remark}
\newtheorem*{remark}{Remark}
\newcommand{\R}{\mathbb{R}} 
\newcommand{\E}{\mathbb{E}} 
\newcommand{\V}{\text{Var}} 
\newcommand{\soo}{\Sigma_{11}}
\newcommand{\sot}{\Sigma_{12}}
\newcommand{\stt}{\Sigma_{22}}
\newcommand{\saa}{\Sigma_{aa}}
\newcommand{\M}{\mathcal{M}}
\newcommand{\ND}{\text{N}} 
    \def\cd{\stackrel{\mathcal{D}}{\longrightarrow}}
\title{Confidence in Causal Discovery with Linear Causal Models}
\author[1]{\href{mailto:David Strieder <david.strieder@tum.de>?Subject=Your UAI 2021 paper}{David Strieder}{}} 
\author[1,2]{Tobias Freidling}
\author[1]{Stefan Haffner}
\author[1]{\href{mailto:Mathias Drton <mathias.drton@tum.de>?Subject=Your UAI 2021 paper}{Mathias Drton}{}}
\affil[1]{%
    Chair of Mathematical Statistics\\
	Technical University of Munich\\
	Germany
}
\affil[2]{%
    DPMMS\\
    University of Cambridge\\
    UK
}
\begin{document}
\maketitle

\begin{abstract}
  Structural causal models postulate noisy functional relations among a set of interacting variables. The causal structure underlying each such model is naturally represented by a directed graph whose edges indicate for each variable which other variables it causally depends upon. Under a number of different model assumptions, it has been shown that this causal graph and, thus also, causal effects are identifiable from mere observational data. For these models, practical algorithms have been devised to learn the graph. Moreover, when the graph is known, standard techniques may be used to give estimates and confidence intervals for causal effects. We argue, however, that a two-step method that first learns a graph and then treats the graph as known yields confidence intervals that are overly optimistic and can drastically fail to account for the uncertain causal structure. To address this issue we lay out a framework based on test inversion that allows us to give confidence regions for total causal effects that capture both sources of uncertainty: causal structure and numerical size of nonzero effects. Our ideas are developed in the context of bivariate linear causal models with homoscedastic errors, but as we exemplify they are generalizable to larger systems as well as other settings such as, in particular, linear non-Gaussian models.
\end{abstract}

\section{Introduction}\label{sec:intro}

Anticipating the cause and effect of actions is a task the human brain is able to master every day, yet it is challenging to devise statistical methods that reliably infer cause-effect relations from available data.  The field of causal discovery seeks to address this challenge by clarifying when it is theoretically possible to infer a causal effect and by developing practical methods to estimate the effect from data \citep{Pearl09,spirtes:book}.  A widely studied approach adopts the paradigm of structural causal models, in which each variable is a function of a subset of other variables (its causes) and a stochastic error term; see also~\cite{peters:book} or \citet[Part IV]{handbook}.  The causal perspective results from viewing these functions as mechanisms that assign values based on the values of causes.  Algorithms for causal discovery infer the structure of such a causal model, which is naturally represented by a directed graph whose edges point from causes to effects.  Moreover, for a known causal graph, standard methods give point estimates and confidence intervals for a causal effect of interest;  at least, this is the case in the causally sufficient setting, where all relevant variables have been observed.

It is straightforward to combine a method that learns a graph with a method to subsequently estimate and make confidence statements about causal effects.  However, such a two-step approach tacitly conditions away the uncertainty that arises from the data-driven model choice and ignores the uncertainty that exists with respect to the causal structure.  As a result, this approach is overly optimistic in its conclusions about existence and strength of causal effects.  Despite the extensive literature that exists on causal discovery, we are not aware of prior work that accounts for uncertainty in the causal structure when providing confidence statements about the inferred causal direction or effect size.  Here, the term `confidence' is used in the technical sense of confidence sets with a given desired frequentist coverage probability.  An entirely different approach that we do not consider here would be to form Bayesian credible sets.  Indeed, a number of authors have pursued Bayesian approaches, but primarily with a focus on the uncertainty in the graphical structure as opposed to causal effects; see
 \cite{10.5555/1795114.1795143},  \cite{10.5555/3020652.3020677}, and \cite{cao:2019} for three selected examples.
 
In this article we present a framework to construct confidence intervals for the total causal effect that account for the uncertainty inherent in the data-driven selection of a causal model.  The setting we focus on pertains to the case in which we only have access to observational data but consider restricted structural causal models for which causal structure and effects are nevertheless identifiable.  Specifically, we consider the simplest such setting, namely, linear structural equation models with errors that are homoscedastic, i.e., of equal variance \citep{PetersEV}.  Their causal ordering is identifiable as it corresponds to an ordering of conditional variances \citep{DrtonEV,Ghoshal2018}. Our presentation focuses on this setting and also the two-variable case, but the framework we lay out in Section~\ref{sec:bivariate} is general and can be applied to larger systems as well as other model classes, in particular, linear models with non-Gaussian errors, as we also discuss in Sections~\ref{sec:higher-dim}-\ref{sec:discussion}.

\section{Background}
\label{sec:background}

This section reviews linear structural equation models and the total causal effect that is the central object of study.
Moreover, we discuss the difficulties of applying resampling methods for statistical inference and review the test-inversion approach.

\subsection{Structural Equation Models}
\label{subsec:LSEM}

Consider observational data in the form of a sample of independent copies of a random vector $X=(X_1, ... ,X_d)$ which, without loss of generality, is assumed to have zero mean.
\textit{Linear structural equation models} (LSEMs) assume that
$X$ solves the equation system 
\begin{equation} \label{LSEM}
X_j=\sum_{i\neq j}\beta_{ji}X_i + \epsilon_j, \quad j=1, ... ,d,
\end{equation}
where $B:=[\beta_{ji}]_{j,i=1}^d$ are unknown parameters that constitute  direct causal effects between the variables, and the $\epsilon_j$ are independent error terms with mean zero.  Following a line of work initiated by \cite{PetersEV}, we further assume the errors to be homoscedastic, that is, for an unknown variance parameter $\sigma^2\in(0,\infty)$ we have
\begin{equation}
\label{eq:equalvar}
\V[\epsilon_1]=\dots=\V[\epsilon_d]=\sigma^2.
\end{equation}
Each specific LSEM restricts a subset of the parameters $\beta_{ji}$ to be zero.  Put differently, each model is associated to a directed graph $\mathcal{G}$ and constrains $\beta_{ji}=0$ whenever $\mathcal{G}$ does not contain edge $i\to j$.
As in related work, we assume $\mathcal{G}$ to be a \textit{directed acyclic graph} (DAG).  Then the matrix $B$ is permutation similar to a strictly lower triangular matrix, and system \eqref{LSEM}  admits the unique solution $X=(I_d-B)^{-1} \epsilon$, where $I_d$ is the identity. Hence, $X$ has covariance matrix
\begin{equation}
\label{eq:cov-matrix}
    \E[XX^{T}]=\sigma^2(I_d-B)^{-1}(I_d-B)^{-T}.
\end{equation}

\subsection{Total Causal Effect}

In the causal interpretation of LSEMs the equations in \eqref{LSEM} are viewed as making assignments, with the variable on the left-hand side being assigned the value specified on the right-hand side. In this framework, the effect of an experimental intervention that externally sets the value of $X_i$ to $x_i$ is then captured by replacing the $i$th equation in \eqref{LSEM} by $X_i=x_i$.  In probabilistic notation this is expressed as $\text{do}(X_i=x_i)$, see \citet{Pearl09}.

Our interest is in the \emph{total causal effect} that an intervention on variable $X_i$ has on another variable $X_j$.  In linear models we may quantify this effect by considering a unit change in the intervention value $x_i$.  The total effect of $X_i$ on $X_j$ is then
\begin{equation*}\label{TCE}
    \mathcal{C}(i\rightarrow j):=\frac{\text{d}}{\text{d} x_i} \E[X_j|\text{ do}(X_i=x_i)] = 
    (I_d - B)^{-1}_{ji}.
\end{equation*}
Note that $\mathcal{C}(i\rightarrow j)=0$ if there does not exist a directed path from $i$ to $j$ in the underlying DAG $\mathcal{G}$.

Our goal in this article is to construct confidence intervals for $\mathcal{C}(i\rightarrow j)$ when the underlying causal structure is unknown and has to be learned.  We emphasize that this is a well-defined problem when the causal structure is identifiable, as in the homoscedastic setting laid out in Section~\ref{subsec:LSEM}. Indeed, in such a setting, every feasible distribution (or here simply, covariance matrix) uniquely determines a minimal causal graph which entails a unique value for $\mathcal{C}(i\rightarrow j)$.

Lacking alternative methods, a commonly used ``naive'' approach is to form confidence intervals by splitting the two involved tasks: First, the model, i.e., pattern of zero-entries of $B$, is estimated and, second, confidence intervals for the causal effects within the model are derived. This procedure, however, tacitly conditions away the uncertainty that arises from the data-driven model choice.  
For this reason, such naive confidence intervals often have poor coverage probabilities, especially under high uncertainty with respect to the model. For example, consider a bivariate example with a small true causal effect in one of the two possible causal directions. If the wrong causal ordering is learned, which happens with probability close to 0.5, one concludes with certainty yet wrongly that the causal effect is zero or formally one concludes, no matter the significance level, the confidence interval is the singleton set {0}. Hence, almost half the time the confidence interval will not cover the true non-zero parameter.

In the remainder of the paper we propose methods to address this issue.

\subsection{Resampling}
\label{sec:resampling}

Bootstrapping, subsampling and other resampling procedures are often applied to construct confidence intervals based on estimators whose sampling distribution is difficult to derive.  Resampling also offers a seemingly straightforward solution to the problem considered here.  This attempted solution proceeds by computing for each resampled data set a causal effect estimate that is obtained by concatenating a consistent model selection method that learns the graph and a consistent estimator of the causal effect in the learned model.  This concatenation is well defined by the identifiability of  homoscedastic LSEMs \citep{PetersEV,DrtonEV}, and it yields a function $T_{ij}$ such that $T_{ij}(\Sigma)=\mathcal{C}(i\rightarrow j)$ for all covariance matrices $\Sigma=\text{Var}[X_1,\dots,X_d]$ that come from a homoscedastic LSEM specified by \eqref{LSEM} and \eqref{eq:equalvar}.  However, bootstrap procedures that evaluate $T_{ij}$ on resampled data are not valid when there is non negligible uncertainty about the model.  Indeed, bootstrap/subsampling procedures may fail drastically when the mapping $T_{ij}$ lacks smoothness \citep{andrews:guggenberger:2010,drton:williams:2011}. Our simulations in Section~\ref{subsec:sims} validate those problems as can be seen in Table~\ref{tab::cover}, even in our simplified two-variable setting with generous sample size bootstrap methods do not achieve the required coverage probability. We emphasize that \citet{drton:williams:2011} demonstrate that for complex composite hypotheses settings (as in our case), the asymptotic behavior of bootstrap tests and confidence intervals is difficult to predict, even in low dimensions. 

The subtleties for homoscedastic LSEMs stem from the fact that the set of covariance matrices that are associated to at least one possible DAG is a union of smooth manifolds and singular at the intersections of the manifolds.  Figure~\ref{fig:d2} depicts this for  $d=2$ variables, where the two largest graphs $1\to 2$ and $1\leftarrow 2$ each define a 2-dimensional subset of the 3-dimensional cone of covariance matrices.  The singular points are where the two models meet, and their existence invalidates bootstrapping as a method for correctly capturing model uncertainty. This fact is again underlined by our simulations in Section~\ref{subsec:sims}, see Table~\ref{tab::cover}. Coverage failure occurs for small true causal effects and therefore further discredits bootstrapping as a method for calculating reliable confidence intervals for causal effects in practice.

\begin{figure}[t]
    \centering
    \includegraphics[scale=0.65]{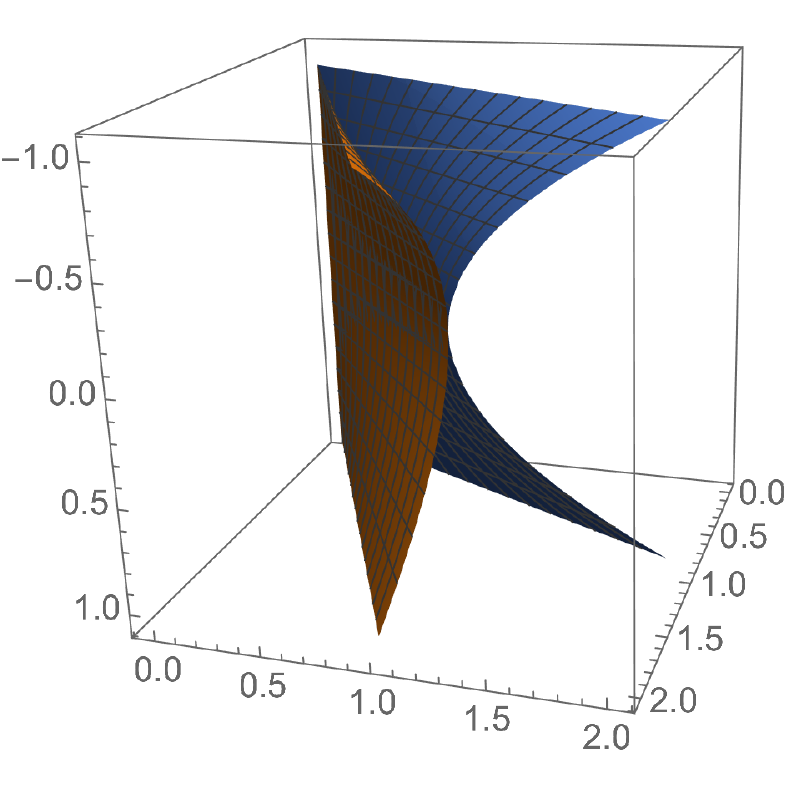}
    \caption{The set of $2\times 2$ covariance matrices coming from  LSEMs with homoscedastic errors.}
    \label{fig:d2}
\end{figure}

Let the cone of positive definite matrices be denoted by 
\begin{equation*}
    \M := \{\Sigma \in \R^{d\times d}\colon \Sigma^T = \Sigma,\, \Sigma \text{ positive definite}\}.
\end{equation*}
It is possible to define continuous extensions $\Tilde{T}:\M\to\mathbb{R}$ for $T_{ij}$.  However, in our exploration of such continuous extensions for $d=2$ variables, we were unable to give extensions for which there exists a unique scaling sequence $\tau_n$ such that $ \tau_n(\Tilde{T}(\hat{\Sigma})- \Tilde{T}(\Sigma))
$
always converges to a nondegenerate limit -- here, $\hat\Sigma$ is the empirical covariance matrix.  Convergence to nondegenerate limits is required for subsampling methods \citep{resampling}.

\subsection{Inversion of Tests}
\label{subsec:inversion}

In order to circumvent the difficulties posed by the non-smooth nature of the causal effect of interest, we develop in the following sections an approach that leverages the duality between statistical hypothesis tests and confidence regions.  Let $\alpha\in(0,1)$ be a fixed significance level.  Suppose that for each attainable value $\psi$ of the causal effect $\mathcal{C}(i\rightarrow j)$ we have a level $\alpha$ test of the hypothesis that the effect is indeed $\psi$.  Let $A(\psi)$ be the acceptance region, i.e., the set of all data sets for which the test does not reject $\psi$ as a hypothesized causal effect.  Then a $(1-\alpha)$-confidence region for $\mathcal{C}(i\rightarrow j)$ given the data set $x$ is
obtained as
\[C(x):=\{\psi :  x \in A(\psi)\}. \]

This approach shifts the burden to the construction of suitable tests of a hypothesized causal effect.  Without knowledge of the precise model, this remains challenging.  In the next section we present three concrete solutions based on likelihood ratio tests of order constraints \citep{Constrained}, and the recent theory of universal inference \citep{Universal}.

\subsection{Gaussian Likelihood}

In our construction of test statistics, we will assume the data to consist of random vectors $X^{(1)},\dots,X^{(n)}$ drawn independently from a (centered) multivariate normal distribution $\ND(0,\Sigma)$ with density $p(x| \Sigma)$.  Let $\hat\Sigma=\tfrac{1}{n}\sum_{i=1}^n X^{(i)}{X^{(i)}}^T$, and let $\M_0 \subset  \M$ be a subset of covariance matrices.  In order to test 
\begin{equation*}
	\text{H}_0: \Sigma \in\!\M_0
	\quad\text{against}\quad
	\text{H}_1: \Sigma \in\!\M \setminus \M_0 ,
\end{equation*}
we use the likelihood-ratio statistic
\[\lambda_n=2\Big(\sup_{\Sigma \in \M} \ell_n(\Sigma)- \sup_{\Sigma \in \M_0} \ell_n(\Sigma)\Big) \]
based on the log-likelihood $\ell_n(\Sigma)=\sum_{i=1}^n \log p(X_{(i)}|\Sigma)$ with
\begin{align*}
    \tfrac{2}{n}\ell_n(\Sigma)&=
-\log\det(2\pi\Sigma)-\text{tr}(\Sigma^{-1}\hat\Sigma).
    \end{align*}

\section{Bivariate Case}
\label{sec:bivariate}

This section develops the details of our approach in the two-dimensional setting, with two variables $X_1$ and $X_2$.  

\subsection{Representation of Causal Effect}\label{section:constraints}

For $d=2$, the model uncertainty boils down to uncertainty about the direction of the single edge, $1\to 2$ versus $1\leftarrow 2$, and we maintain the two possible LSEMs
\begin{align*}
\text{(M1)}\qquad &X_1=\epsilon_1,  &&X_2=\beta_{21} X_1  + \epsilon_2,\\
\text{(M2)}\qquad &X_1=\beta_{12} X_2 + \epsilon_1, &&X_2=\epsilon_2.
\end{align*}
The mere assumption of a homoscedastic LSEM imposes structure on the covariance matrix $\Sigma$ of $X=(X_1,X_2)^T$. Under model (M1), we obtain
\[ \Sigma= \begin{pmatrix}
\soo & \sot \\
\sot & \stt
\end{pmatrix}=\sigma^2 \begin{pmatrix}
1 & \beta_{21} \\
\beta_{21} & \beta_{21}^2 + 1
\end{pmatrix}, \]
which leads to the relations
\begin{equation}\label{M1-eq}
    \beta_{21}=\frac{\sot}{\soo},\quad\beta_{21}^2 + 1 = \frac{\stt}{\soo},\quad \soo^2 = \det(\Sigma),
\end{equation}
with $\det(\Sigma)= \soo \stt-\sot^2 $.
Analogously, for model (M2),
\begin{equation}\label{M2-eq}
    \beta_{12}=\frac{\sot}{\stt},\quad\beta_{12}^2 + 1 = \frac{\soo}{\stt},\quad  \stt^2 =  \det(\Sigma).
\end{equation}
Hence, the set of $2\times 2$ covariance matrices $\Sigma$ that are possible under homoscedastic LSEMs is $\M_r=\M_{r1}\cup\M_{r2}$, where
\begin{align}\label{Mr}
\M_{ra}:=\big\{&\Sigma \in \M \colon 
 \ \saa^2 = \det(\Sigma)\big\},\quad a=1,2. 
\end{align}

By symmetry, when considering total causal effects we may focus on the effect of $X_1$ on $X_2$, which is $\beta_{21}$ under model (M1) and zero under model (M2).  So,  by  \eqref{M1-eq},
\begin{equation}\label{ce-12}
    \mathcal{C}(1\rightarrow 2) =
    \frac{\sot}{\soo} \mathds{1}\{\Sigma\in\M_{r1}\}.
\end{equation}

\subsection{Constrained Likelihood-Ratio Tests}

Our construction of confidence sets inverts tests of hypotheses that specify ${\mathcal{C}(1\rightarrow 2) = \psi}$.  We now present two approaches to perform likelihood ratio tests.  Likelihood ratio statistics are easily defined but their probability distributions are generally difficult to determine at singularities, as encountered here where the alternative $\M_r$ is a union, and thus non-smooth.  To simplify distribution theory, we will relax the alternative to be the entire positive definite cone $\M$.

\subsubsection{Testing Inequality Constraints}

Our first approach exploits that  model selection for homoscedastic LSEMs can be achieved by ordering variances \citep{DrtonEV}.  Indeed, if $\Sigma\in\M_r$, then $\Sigma\in\M_{r1}$ precisely when $\soo\leq\stt$.  We will use this fact to set up hypotheses that encode ${\mathcal{C}(1\rightarrow 2) = \psi}$ for given $\psi$.  Three cases arise: 
$\psi=0$, $0 < \lvert \psi \rvert < 1$ and  $\lvert \psi \rvert \geq 1$.

\emph{Case $\psi=0$.}  Given $\Sigma\in\M_r$, we have  $\mathcal{C}(1\rightarrow 2) = 0$ if and only if \(\soo \geq \stt\). Hence, we conduct the test of
\begin{equation*}\label{H_0}
	\text{H}_0: \soo\geq\stt
	\quad\text{against}\quad
	\text{H}_1: \Sigma \in \M.
\end{equation*}
In this scenario, the asymptotic null distribution of $\lambda_n$ depends on the unknown value of $\Sigma$, but it is easy to see that the stochastically largest asymptotic distribution arises when $\soo=\stt$, in which case
\begin{equation*}
    \lambda_n \cd 0.5\chi_0^2 + 0.5\chi_1^2,\quad\text{as }n\to\infty,
\end{equation*}
where $\chi_d^2$ denotes a chi-square distribution with $d$ degrees of freedom and $\chi_0^2 \equiv 0$; see \cite{Constrained}.

\emph{Case $0 < \lvert \psi \rvert < 1$.}
When $\psi\not=0$, it must be that \(X_2\) is the causally dependent variable which corresponds to \(\soo \leq \stt\) and \(\sot/\soo = \psi \), according to (\ref{ce-12}). Hence, we test
\begin{equation*}
	\text{H}_0: \sot = \psi\soo, \,\soo\leq\stt
	\quad\text{against}\quad
	\text{H}_1: \Sigma \in \M.
\end{equation*}
For the least favorable covariance matrix in $\text{H}_0$, it holds that
\begin{equation*}
    \lambda_n \cd 0.5\chi_1^2 + 0.5\chi_2^2,\quad\text{as }n\to\infty.
\end{equation*}

\emph{Case $\lvert \psi \rvert \geq 1$.}  Again the two constraints \(\soo~\leq~\stt\) and \({\sot = \psi\soo}\) have to be satisfied.
However, the Cauchy-Schwarz inequality yields $\stt \geq \sot^2 /\soo = \psi^2\soo \geq \soo$, since \(|\psi| \geq 1\). Consequently, the inequality condition is automatically fulfilled and it suffices to test
\begin{equation*}
	\text{H}_0: \sot = \psi \soo
	\quad\text{against}\quad
	\text{H}_1: \Sigma \in \M.
\end{equation*}
The likelihood ratio statistic satisfies
\begin{equation*}
    \lambda_n\cd \chi_1^2,\quad \text{as } n\to\infty.
\end{equation*}

We remark that in the first and second case one may also follow a two-step procedure that uses a suitably calibrated pretest to decide which asymptotic distribution to employ \citep{two-step}.  We report no details on this approach here as we found the power gains to be only very slight.

Based on the above suite of tests, we may form a confidence interval from the accepted values of $\psi$, which we determine in practice by inspecting a fine grid of choices.  In our simulations in Section~\ref{subsec:sims} we refer to this method as \texttt{LRT1}.  We will also consider a heuristic variant in which we compute the likelihood ratio statistics by restricting the null and the alternative to the union of the two LSEMs, i.e., to $\M_r$, but still set critial values based on the asymptotic distributions given above. We refer to this method as \texttt{LRT1b}.

\subsubsection{Testing Polynomial Constraints}

The previous method encodes membership in model (M1) via the key inequality $\soo\le\stt$.  As an alternative we may directly work with the set of covariance matrices $\M_{r1}$ given in \eqref{Mr} when specifying null hypotheses.  However, to retain simple distributional approximations we continue to relax the alternative to be the entire p.d.-cone $\M$.

Recall that the causal effect $\mathcal{C}(1 \to 2)$ is non-zero only under model (M1), i.e., if $\Sigma\in\M_{r1}$.  In this case the effect is $\mathcal{C}(1 \to 2) = \sot/\soo$.  All matrices with $\mathcal{C}(1 \to 2)=0$ belong to $\M_{r2}$.  Thus, we test the null hypotheses
\begin{align*}
\text{H}_0: \begin{cases}
\sot = \psi \soo \text{ and }  \Sigma\in\M_{r1}, 
&\mathrm{if\,} \psi \neq 0, \\
\Sigma\in\M_{r2},
&\mathrm{if\,} \psi = 0. \\
\end{cases}
\end{align*}
We write $\Theta_0^{(\psi)}$ for the respective sets of covariance matrices.

\emph{Case $\psi\not=0$.}  If $\psi \neq 0$, the set $\Theta_0^{(\psi)}$ is a one-dimensional submanifold of the three-dimensional p.d.-cone $\M$.  The likelihood ratio statistic in this case is found to be
\begin{align*}
\lambda_n^{(\psi)}=& 2n \log\bigg\{\frac{1}{2\det( \hat\Sigma)^{1/2}}\mathrm{tr}\Big[
\begin{pmatrix}
1+\psi^2 & -\psi \\ -\psi & 1
\end{pmatrix} 
\hat\Sigma\Big]\bigg\}, 
\end{align*}
and for $\Sigma \in \Theta_0^{(\psi)}$ we have
\begin{align*}
\lambda_n^{(\psi)}  \cd \chi_2^2, \quad \text{as } n \to \infty.
\end{align*}

\emph{Case $\psi=0$.} The set $\Theta_0^{(0)}$ is a two-dimensional submanifold of $\M$ and yields the likelihood ratio statistic
\begin{align*}
\lambda_n^{(0)} = \,& 2n\log\bigg\{\frac{1}{2 \det (\hat\Sigma)^{1/2}}\Big(\hat\Sigma_{11} -\frac{\hat\Sigma_{12}^2}{\hat\Sigma_{22}} + \hat\Sigma_{22}\Big)\bigg\} .
\end{align*}
Here, for $\Sigma \in \Theta_0^{(0)}$,
\begin{align*}
\lambda_n^{(0)}  \cd \chi_1^2,\quad \text{as }n \to \infty.
\end{align*}
Given the explicit form of the likelihood ratio statistics we can explicitly determine an  asymptotic confidence set for the total causal effect.
\begin{theorem}
Let $\alpha \in (0,1)$. Then an asymptotic $(1-\alpha)$ confidence set for the causal effect $\mathcal{C}(1\rightarrow2)$ is given by
\begin{align*}
    C = \{\psi \neq 0  :  \lambda_n^{(\psi)}\leq \chi_{2,1-\alpha}^2 \} \cup \{0 : \lambda_n^{(0)}\leq \chi_{1,1-\alpha}^2\}.
\end{align*}
Furthermore, if we define
\[
K := 2\, \hat\Sigma_{11} \det (\hat\Sigma)^{1/2} \exp \Big(\frac{1}{2n} \chi^2_{2, 1-\alpha}\Big) - \hat\Sigma_{11}^2 - \det(\hat\Sigma) 
\]
and $K\geq0$, then 
\[\{\psi \neq 0  :  \lambda_n^{(\psi)}\leq \chi_{2,1-\alpha}^2 \}
=\left[\frac{\hat\Sigma_{12} - \sqrt{K}}{\hat\Sigma_{11}},\frac{\hat\Sigma_{12} + \sqrt{K}}{\hat\Sigma_{11}}\right].
\]
\end{theorem}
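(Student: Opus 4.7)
The plan is to prove the theorem in two parts. For the coverage statement, I would invoke the test inversion recipe from Section~\ref{subsec:inversion}: by construction, $C$ is the set of hypothesized effects $\psi$ that are not rejected at asymptotic level $\alpha$ by the corresponding likelihood ratio test. If the true causal effect $\psi^\ast$ is nonzero, the true covariance matrix lies in the one-dimensional submanifold $\Theta_0^{(\psi^\ast)}$, so the convergence $\lambda_n^{(\psi^\ast)} \cd \chi_2^2$ stated just before the theorem gives $\PP(\psi^\ast \in C) \to 1-\alpha$. If $\psi^\ast = 0$, the true matrix lies in $\Theta_0^{(0)}$, on which $\lambda_n^{(0)} \cd \chi_1^2$, and $\PP(0 \in C) \to 1-\alpha$ analogously.

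For the explicit form of the nonzero stratum, I would first expand the $2\times 2$ trace appearing in the definition of $\lambda_n^{(\psi)}$ to obtain
\begin{equation*}
\lambda_n^{(\psi)} = 2n \log\!\left\{\frac{(1+\psi^2)\hat\Sigma_{11} - 2\psi \hat\Sigma_{12} + \hat\Sigma_{22}}{2\,\det(\hat\Sigma)^{1/2}}\right\}.
\end{equation*}
The acceptance condition $\lambda_n^{(\psi)} \leq \chi^2_{2,1-\alpha}$ is then, after exponentiating and multiplying through by $2\det(\hat\Sigma)^{1/2} > 0$, equivalent to the quadratic inequality in $\psi$
\begin{equation*}
\hat\Sigma_{11}\psi^2 - 2\hat\Sigma_{12}\psi + \hat\Sigma_{11} + \hat\Sigma_{22} - 2\det(\hat\Sigma)^{1/2}\exp\!\left(\tfrac{1}{2n}\chi^2_{2,1-\alpha}\right) \leq 0.
\end{equation*}
Substituting the identity $\hat\Sigma_{11}\hat\Sigma_{22} - \hat\Sigma_{12}^2 = \det(\hat\Sigma)$ collapses the discriminant of this quadratic to exactly $4K$. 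Consequently, when $K \geq 0$ the quadratic formula yields the two roots $(\hat\Sigma_{12} \pm \sqrt{K})/\hat\Sigma_{11}$, and positivity of the leading coefficient $\hat\Sigma_{11}$ makes the solution set the closed interval stated in the theorem. When $K < 0$ the inequality admits no real solution, so the nonzero stratum of $C$ is empty.

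No step poses a genuine obstacle: the coverage assertion is immediate from the chi-square limits recorded in the paragraphs just before the theorem, and the explicit form is routine algebra based on the $2\times 2$ trace identity together with completing the square. The only cautions are the strict positivity of $\det(\hat\Sigma)$ and $\hat\Sigma_{11}$ needed to justify taking logarithms and dividing by the leading coefficient; both hold almost surely in the standing regime of a nondegenerate Gaussian sample with $n\geq 2$.
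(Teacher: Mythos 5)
Your proposal is correct and follows essentially the same route as the paper: the coverage claim is read off from the test-inversion framework together with the $\chi^2_2$ and $\chi^2_1$ limits stated before the theorem, and the explicit interval comes from rewriting $\lambda_n^{(\psi)}\leq\chi^2_{2,1-\alpha}$ as the strictly convex quadratic inequality in $\psi$, whose discriminant is $4K$ and whose roots are $(\hat\Sigma_{12}\pm\sqrt{K})/\hat\Sigma_{11}$. You merely spell out the trace expansion and the discriminant computation that the paper compresses into ``it is easy to see.''
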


\begin{remark}
Since the dimension of $\Theta_0^{(0)}$ exceeds that of $\Theta_0^{(\psi)}$ for $\psi\not=0$, we are led to consider two different degrees of freedom for chi-square limits.  As a result there exist data for which we reject a zero effect but accept positive and negative effects that are arbitrarily small in magnitude.  However, this case arises very rarely.  If it does, it may be preferable to simply include zero in the confidence interval.
\end{remark}

\begin{proof}
If $\psi \neq 0$, it is easy to see that $\lambda_n^{(\psi)} \leq \chi^2_{2, 1-\alpha}$ if and only if
\begin{align*}\label{polynomial inequality lrt leq chi_1}
& \psi^2 \hat\Sigma_{11} - 2\psi \hat\Sigma_{12} + \hat\Sigma_{11}+ \hat\Sigma_{22}\\ &-2  \det( \hat\Sigma)^{1/2}\exp\Big(\frac{1}{2n}\chi^2_{2, 1-\alpha} \Big) \leq 0. 
\end{align*}
The inequality features a strictly convex quadratic polynomial in $\psi$.  The confidence interval is nonempty if the quadratic has real roots, which occurs for $K\ge 0$.  The roots  are $(\hat\Sigma_{12} \pm \sqrt{K})/\hat\Sigma_{11}$ and give the claimed explicit lower and upper end of the confidence interval.
The confidence set is then completed by checking whether we accept (and thus include) $\psi=0$.
\end{proof}
In our simulations, we refer to this method as \texttt{LRT2}.

\subsection{Split Likelihood Ratio Tests}

\citet{Universal} introduced the framework of universal inference, a general method for constructing hypothesis tests and confidence regions that are conservative but valid in finite samples.  
Universal inference employs a modification of the classical likelihood-ratio statistic termed the \textit{split likelihood ratio} (SLR), which is especially appealing for irregular composite hypotheses where asymptotic distributions are intractable.  As its name indicates, the SLR statistic is based on a data splitting approach.  Type-I error control is guaranteed by an application of Markov's inequality. 

The method proceeds by splitting the data into two subsets $D_0 = \{ X^{(1)},\ldots,X^{(k)} \}$ and $D_1 = \{ X^{(k+1)},\ldots,X^{(n)}\}$.  Let $\ell_0(\Sigma)$ denote the log-likelihood function based on $D_0$, that is, $\ell_0(\Sigma) = \sum_{i =1}^k \log p(X^{(i)} \mid \Sigma)$.
We then calculate the \textit{profile log-likelihood function} 
\begin{align*}
    \ell_\dagger(\psi) = \max\{ \ell_0(\Sigma): \Sigma\in\M_r,\; \mathcal{C}(1\to 2) = \psi\},
\end{align*}
and choose any estimator $\Tilde\Sigma^1$ based on $D_1$. Then
\begin{equation}\label{conf:split}
C= \{ \psi: \ell_0(\Tilde\Sigma^1)-\ell_\dagger(\psi) \leq \log(1/\alpha) \}
\end{equation}
is a (conservative) confidence set for the total causal effect $\mathcal{C}(1\to 2)$ with confidence level $1-\alpha$. 

Let $\hat\Sigma^0 =\frac{1}{k} \sum_{i=1}^k X^{(i)} {X^{(i)}}^T$ be the empirical covariance matrix for $D_0$.  In each of our two possible LSEMs, (M1) and (M2), maximizing $\ell_0$ with respect to the variance parameter $\sigma^2$ is straightforward; recall \eqref{eq:cov-matrix}.  We find that for fixed $B$, the maximum of $\ell_0$ over $\sigma^2>0$ is
\begin{align}\label{max loglikelihood sigma2}
- k\log\big\{\pi\,\mathrm{tr}\big[(I_d-B)^T (I_d-B) \hat\Sigma^0 \big]\big\} - k.
\end{align}
Now we assume that the causal effect $\mathcal{C}(1 \to 2)$ equals a fixed value $\psi\in\mathbb{R}$ and we maximize $\ell_0$ further over any remaining parameters.

\emph{Case $\psi \neq 0$.}  The covariance matrix $\Sigma$ is in $\M_{r1}$ and in its parametrization $\beta_{21}=\psi$. From \eqref{max loglikelihood sigma2}, we find that for $\psi \neq 0$, 
\begin{align}
\label{eq:max:s2}
\ell_\dagger(\psi)  = -k\log\bigg\{\pi\,\mathrm{tr}\left[
\begin{pmatrix}
1+\psi^2 & -\psi \\ -\psi & 1
\end{pmatrix} 
\hat\Sigma^0 \right]\bigg\} - k.
\end{align}

\emph{Case $\psi = 0$.}  Now, $\Sigma$ may be any matrix in $\M_{r2}$. 
Straightforward calculations show that 
\begin{align*} \max_{\beta_{12}}
\mathrm{tr}\left[\begin{pmatrix}
1 &-\beta_{12} \\ -\beta_{12}& 1+ \beta_{12}^2
\end{pmatrix}\hat\Sigma^0  \right]=
\hat\Sigma^0 _{11} - \frac{(\hat\Sigma^0_{12})^2}{\hat\Sigma^0_{22}} + \hat\Sigma^0_{22}.
\end{align*}
Inserting this expression in \eqref{max loglikelihood sigma2} yields the profile $\ell_\dagger(0)$.

With these preparations, we can now explicitly calculate the boundaries of the confidence set  for the total causal effect given in \eqref{conf:split}.
\begin{theorem} Let $\alpha \in (0,1)$ and define
\begin{align*}
G_a:=2& \hat\Sigma^0_{aa} \alpha^{-1/k}\det (\Tilde\Sigma^1)^{1/2} \exp\Big(\frac{1}{2} \mathrm{tr}[(\Tilde\Sigma^1)^{-1} \hat\Sigma^0] -1\Big)\\ &- (\hat\Sigma^0_{aa})^2 - \det(\hat\Sigma^0), \quad a=1,2.
\end{align*}
(i) If $G_1\geq0$, then the nonzero elements of the confidence set $C$ from \eqref{conf:split} are the nonzero elements of the interval $[L,U]$ with
 \[L:=\frac{\hat\Sigma^0_{12} - \sqrt{G_1}}{\hat\Sigma^0_{11}}, \quad  U:=\frac{\hat\Sigma^0_{12} + \sqrt{G_1}}{\hat\Sigma^0_{11}}. \]
(ii) The set $C$ from \eqref{conf:split} contains zero if and only if $G_2\geq0$.
\end{theorem}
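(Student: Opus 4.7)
My plan is to open up the defining inequality $\ell_0(\tilde\Sigma^1) - \ell_\dagger(\psi) \leq \log(1/\alpha)$ of the confidence set \eqref{conf:split} case by case, and to match the resulting bounds against the quantities $G_1, G_2$. Substituting $\ell_0(\tilde\Sigma^1) = -\tfrac{k}{2}\log\det(2\pi\tilde\Sigma^1) - \tfrac{k}{2}\mathrm{tr}((\tilde\Sigma^1)^{-1}\hat\Sigma^0)$, dividing by $k$, exponentiating, and using $\det(2\pi\tilde\Sigma^1)^{1/2} = 2\pi\det(\tilde\Sigma^1)^{1/2}$ for $d=2$, the inclusion $\psi \in C$ will reduce to a bound of the form $f(\psi) \leq R$, where
\[
R := 2\alpha^{-1/k}\det(\tilde\Sigma^1)^{1/2}\exp\Bigl(\tfrac{1}{2}\mathrm{tr}\bigl((\tilde\Sigma^1)^{-1}\hat\Sigma^0\bigr) - 1\Bigr)
\]
and $f$ depends on whether $\psi$ is zero.

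For part (i), inserting \eqref{eq:max:s2} gives $f(\psi) = (1+\psi^2)\hat\Sigma^0_{11} - 2\psi \hat\Sigma^0_{12} + \hat\Sigma^0_{22}$, which is a strictly convex quadratic in $\psi$ since $\hat\Sigma^0_{11}>0$ almost surely. Applying the quadratic formula to $f(\psi) - R \leq 0$ yields roots $(\hat\Sigma^0_{12} \pm \sqrt{D})/\hat\Sigma^0_{11}$ with discriminant $D = \hat\Sigma^0_{11}R - (\hat\Sigma^0_{11})^2 - (\hat\Sigma^0_{11}\hat\Sigma^0_{22} - (\hat\Sigma^0_{12})^2)$; using $\det(\hat\Sigma^0) = \hat\Sigma^0_{11}\hat\Sigma^0_{22} - (\hat\Sigma^0_{12})^2$ this is precisely $G_1$, so the set of nonzero accepted values equals $[L,U]$ exactly when $G_1\geq 0$. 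For part (ii), substituting the explicit expression for $\ell_\dagger(0)$ derived just before the theorem gives $f(0) = \hat\Sigma^0_{11} - (\hat\Sigma^0_{12})^2/\hat\Sigma^0_{22} + \hat\Sigma^0_{22}$. Clearing the positive factor $\hat\Sigma^0_{22}$ from $f(0) \leq R$ and rearranging produces $\hat\Sigma^0_{22}R - (\hat\Sigma^0_{22})^2 - \det(\hat\Sigma^0) \geq 0$, which is exactly $G_2 \geq 0$.

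The proof carries no genuine probabilistic content beyond the validity of \eqref{conf:split} supplied by the universal-inference construction already cited. The one place that warrants care is the bookkeeping of normalising constants when peeling the logarithm — specifically matching the $\pi$ inside $\ell_\dagger$ against the factor $\det(2\pi\tilde\Sigma^1)^{1/2}$ and tracking the additive constant $-k$ from \eqref{max loglikelihood sigma2} so that the threshold on $f(\psi)$ turns out to be exactly $R$. Given the close structural parallel with the preceding theorem on \texttt{LRT2} (where an analogous $R$ played the role of $2\det(\hat\Sigma)^{1/2}\exp(\tfrac{1}{2n}\chi^2_{2,1-\alpha})$), I anticipate the main obstacle to be purely clerical rather than conceptual.
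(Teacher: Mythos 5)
Your proposal is correct and follows essentially the same route as the paper's proof: expand the split-likelihood inequality into the convex quadratic $\psi^2\hat\Sigma^0_{11}-2\psi\hat\Sigma^0_{12}+\hat\Sigma^0_{11}+\hat\Sigma^0_{22}-R\le 0$ for $\psi\neq 0$, identify the discriminant with $G_1$, and treat $\psi=0$ analogously via $G_2$. Your bookkeeping of the constants (the factor $\pi$ in $\ell_\dagger$, the $2\pi$ in $\det(2\pi\Tilde\Sigma^1)^{1/2}$, and the additive $-k$) checks out, and you are in fact slightly more explicit than the paper on the $\psi=0$ case, which the paper dismisses as ``similar.''
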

\begin{proof}
Expanding the formula from \eqref{eq:max:s2}, we obtain that a value $\psi\neq 0$ satisfies the inequality for the confidence set in \eqref{conf:split} if and only if 
\begin{align*}\label{quadr ineq psi}
&\psi^2 \hat\Sigma^0_{11}- 2\psi \hat\Sigma^0_{12} +\hat\Sigma^0_{11} +\hat\Sigma^0_{22}  \\&- 2\alpha^{-1/k}(\det \Tilde\Sigma^1)^{1/2} \exp\Big(\frac{1}{2} \mathrm{tr}\big[(\Tilde\Sigma^1)^{-1} \hat\Sigma^0\big] -1 \Big)\leq 0. 
 \end{align*}
If $G_1\geq0$, the involved convex quadratic function has real roots at the claimed values of $L$ and $U$.  If $G_1<0$, the inequality has no solutions and the nonzero part of the confidence set remains empty.

The inequality that is equivalent to inclusion of $\psi = 0$ in the confidence set is similar.
\end{proof}

So far we have not made a specific choice for $\Tilde\Sigma^1$.  In \eqref{conf:split}, we observe that for small confidence sets it is desirable to form estimates $\Tilde\Sigma^1$ that achieve large values of $\ell_0(\Tilde\Sigma^1)$.  Since the universal inference approach poses no problems due to irregular geometry of hypothesis/alternative, it is natural to form an estimate that exploits the assumed validity of at least one of the two LSEMs.  In other words, we choose $\Tilde\Sigma^1$ to be the maximum likelihood estimator under the restriction that $\Sigma \in \M_r$.  We refer to this method as \texttt{SLRT}.

For problems with more than two variables determining the profile log-likelihood is computationally involved in the sense that higher degree polynomial equations need to be solved.  As an alternative we also report results for a heuristic in which we take $\Tilde\Sigma^1$ to be the unrestricted sample covariance matrix and replace the profile log-likelihood $\ell_\dagger(\psi)$ by $\ell_0$ evaluated at a consistent moment estimator.  To form this estimator under the constraint of an assumed causal effect $\mathcal{C}(i\to j)=\psi$, we consider every causal ordering that permits the effect, use the sample variance to estimate the variance of the first variable in the ordering, use sample covariances for all covariances of pairs of variables other than $(i,j)$ and fill the rest of the matrix so that the constraint $\mathcal{C}(i\to j)=\psi$ holds and all error variances are equal.  We then choose the ordering that leads to the maximum value of $\ell_0$. We refer to this method as \texttt{estSLRT}. 

\subsection{Simulations}
\label{subsec:sims}

In this section we present the results of a simulation study with the aim to compare the empirical coverage probabilities and widths of the different proposed confidence intervals for the causal effect. Our simulation experiment was designed as follows. We generated pseudo random numbers according to model (M1) ($1 \to 2$) and model (M2) ($1\leftarrow 2$), respectively, with standard normal errors.  For a selection of different values of $\beta\equiv\beta_{12}$ $(\beta\equiv\beta_{21})$ and different sample sizes $n$, we simulated  $10000$ independent data sets, for which we then determined the confidence sets for $\alpha=0.05$.  

The resulting empirical coverage probabilities are reported in Table \ref{tab::cover}, where all five proposed methods achieve the desired coverage frequency of $0.95$. Furthermore, all proposed methods seem to be particularly conservative if the causal effect is zero. In general the split likelihood ratio methods  are, as expected, the most conservative. For the purpose of comparison we included the empirical coverage probabilities of confidence intervals for the causal effect calculated with two different bootstrapping methods, as explained in Section~\ref{sec:resampling}. For each resampled data set \texttt{Bootstrap1} simply uses the sample covariance to select the direction (via (M1) if $\hat{\Sigma}_{11}\leq\hat{\Sigma}_{22}$, (M2) if $\hat{\Sigma}_{22}<\hat{\Sigma}_{11}$) and subsequently calculates the causal effect based on the selected model. \texttt{Bootstrap2} employs an established causal discovery algorithm for the equal variance case \texttt{GDS}, proposed by \citet{PetersEV}, to estimate the causal effect for each resampled data set. In principle the \texttt{GDS} method is a greedy search algorithm that maximizes the likelihood. As expected, and theoretically explained in Section~\ref{sec:resampling}, bootstrapping methods do not work in practice and do not achieve the the required coverage frequency.

\begin{table}
\centering
\caption{Empirical Coverage of 95\%-Confidence Intervals for the Total Causal Effect of $X_1$ on $X_2$ ($10000$ Replications).}
\resizebox{\linewidth}{!}{%
\begin{tabular}{ rr|rrrrr|rrrrr }
\toprule
 &  & \multicolumn{ 5 }{c}{ $X_1\to X_2$ }  & \multicolumn{ 5 }{|c}{ $X_2 \to X_1$ } \\ 
method & $n$\raisebox{0.1cm}{$\setminus\beta$}  & 0 & 0.05 & 0.1 & 0.2 & 0.5  &  0 & 0.05 & 0.1  & 0.2 & 0.5 \\ 
\midrule
\multirow{3}{*}{LRT1} & 100  & 1.00 & 0.95 & 0.95 & 0.96 & 0.98 & 1.00 & 1.00 &  0.99 & 0.98 & 1.00\\ 
& 500 & 1.00 & 0.95 & 0.95 & 0.96 & 0.97 & 1.00 & 0.99 &  0.98 & 0.98 & 1.00 \\
& 1000  & 1.00 & 0.96 & 0.96 & 0.97 & 0.98 & 1.00 & 0.98 & 0.97 & 0.99 & 1.00 \\
\midrule
\multirow{3}{*}{LRT1b} & 100 &  1.00 & 0.97 & 0.97 & 0.96 & 0.95  & 1.00 & 1.00 & 0.99 & 0.96 & 0.96 \\ 
& 500  & 1.00 & 0.97 & 0.97 & 0.96 & 0.98  & 1.00 & 1.00 & 0.99 & 0.94 & 1.00 \\ 
& 1000 & 1.00 & 0.97 & 0.97 & 0.95 & 0.97  & 1.00 & 1.00 & 0.99 & 0.94 & 1.00 \\ 
\midrule
\multirow{3}{*}{LRT2} & 100  & 0.97 & 0.96 & 0.96 & 0.96 & 0.96  & 0.96 & 0.97 & 0.97 & 0.97 & 0.97 \\ 
& 500  & 0.97 & 0.97 & 0.96 & 0.97 & 0.97  & 0.97 & 0.97 & 0.97 & 0.97 & 1.00 \\ 
&1000 & 0.97 & 0.96 & 0.96 & 0.96 & 0.96  & 0.96 & 0.96 & 0.96 & 0.97 & 1.00 \\
\midrule
\multirow{3}{*}{SLRT} & 100 & 1.00 & 1.00 & 1.00 & 1.00 & 1.00  & 1.00 & 1.00 & 1.00 & 1.00 & 1.00 \\ 
& 500 & 1.00 & 1.00 & 1.00 & 1.00 & 1.00  & 1.00 & 1.00 & 1.00 & 1.00 & 1.00 \\ 
& 1000  & 1.00 & 1.00 & 1.00 & 1.00 & 1.00 & 1.00 & 1.00 & 1.00 & 1.00 & 1.00 \\ 
\midrule
\multirow{3}{*}{estSLRT} & 100  & 0.99 & 0.98 & 0.98 & 0.98 & 0.98  & 0.99 & 0.99 & 0.99 & 0.99 & 0.99 \\ 
& 500 & 0.99 & 0.98 & 0.98 & 0.98 & 0.98  & 0.99 & 0.99 & 0.99 & 0.99 & 0.99 \\ 
& 1000  & 0.99 & 0.98 & 0.98 & 0.98 & 0.98  & 0.99 & 0.99 & 0.99 & 0.99 & 0.99 \\ 
\midrule
\multirow{3}{*}{Bootstrap1} & 100  & 0.99 & \textcolor{red}{0.89} & \textcolor{red}{0.89} & \textcolor{red}{0.91} & 0.94 &  1.00 & 1.00 & 0.99 & 0.99 & 1.00 \\ 
& 500  & 0.99 & \textcolor{red}{0.89} & \textcolor{red}{0.90} & 0.93 & 0.95 & 1.00 & 0.99 & 0.99 & 0.99 & 1.00 \\
& 1000  & 0.99 & \textcolor{red}{0.89} & \textcolor{red}{0.90} & 0.94 & 0.94 & 1.00 & 0.99 & 0.98 & 0.99 & 1.00 \\
\midrule
\multirow{3}{*}{Bootstrap2} & 100  & 1.00 & \textcolor{red}{0.56} &  \textcolor{red}{0.74} & 0.93 & 0.95 &  1.00 & 1.00 & 1.00 & 1.00 & 1.00 \\ 
& 500  & 1.00 &  \textcolor{red}{0.63} &  \textcolor{red}{0.84} & 0.94 & 0.95 &  1.00 & 1.00 & 1.00 & 0.99 & 1.00 \\ 
& 1000 & 1.00 &  \textcolor{red}{0.67} & 0.92 & 0.95 & 0.96 & 1.00 & 1.00 & 1.00 & 0.99 & 1.00 \\ 
\bottomrule
\end{tabular}%
}
\label{tab::cover}
\end{table}

Figure \ref{fig::width:n} displays the mean width of the smallest interval containing the constructed confidence set.  The widths are plotted against the sample size $n$ for a true causal effect of size $0.5$ (in different directions).  We note that while the confidence sets predominantly are intervals, it is possible that they are "torn" with $\{0\}$ as a disconnected component, reflecting the larger null hypothesis that is associated to a zero effect.
The more conservative split likelihood ratio methods yield wider confidence intervals. The \texttt{estSLRT} heuristic outperforms the standard \texttt{SLRT}.  As it does not fully optimize the profile log-likelihood function, \texttt{estSLRT} produces smaller sets, yet the desired empirical coverage is (easily) achieved.
In the case of no causal effect the confidence intervals converge to zero for all proposed methods. 

Figure \ref{fig::zero} shows the percentage of times zero is in the calculated confidence sets. The percentages are plotted for a total causal effect $\mathcal{C}(1\rightarrow 2)=0.5$ against the sample size and for sample size $n=500$ against the size of the causal effect. All proposed methods are consistent and exclude the possibility of no causal effect with increasing sample size. Therefore, all proposed methods not only yield correct confidence sets for the total causal effect but also successfully help decide whether the effect is nonzero or not.
Figure \ref{fig::zero} also shows that we exclude the possibility of no causal effect more frequently the higher the actual causal effect is. 

Even though it seems that in Figure \ref{fig::width:n} and \ref{fig::zero} the \texttt{LRT1b} method seems to perform best, we should stress that we only have theoretical guarantees for the methods \texttt{LRT1}, \texttt{LRT2} and \texttt{SLRT}. Out of those three methods the \texttt{LRT2} method seems to perform best, but this method also has to be handled with care as we will see in the following real data example.

\begin{figure}[t]
  \centering
  \includegraphics[width=\linewidth]{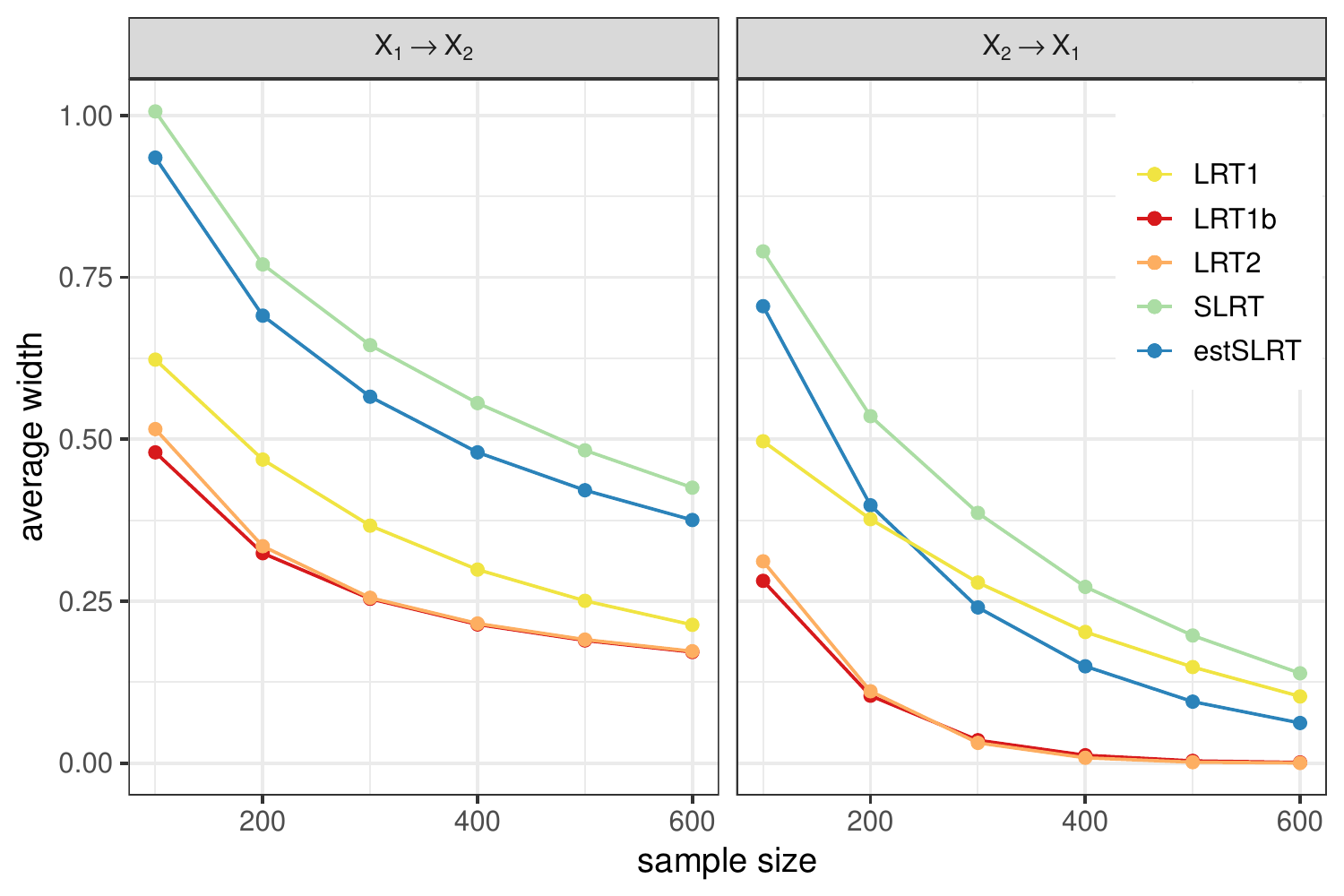}
  \caption{Average maximum width of 95\%-confidence intervals for the causal effect of $X_1$ on $X_2$ ($10000$ replications).}\label{fig::width:n}
\end{figure}

\begin{figure}
  \centering
  \includegraphics[width=\linewidth]{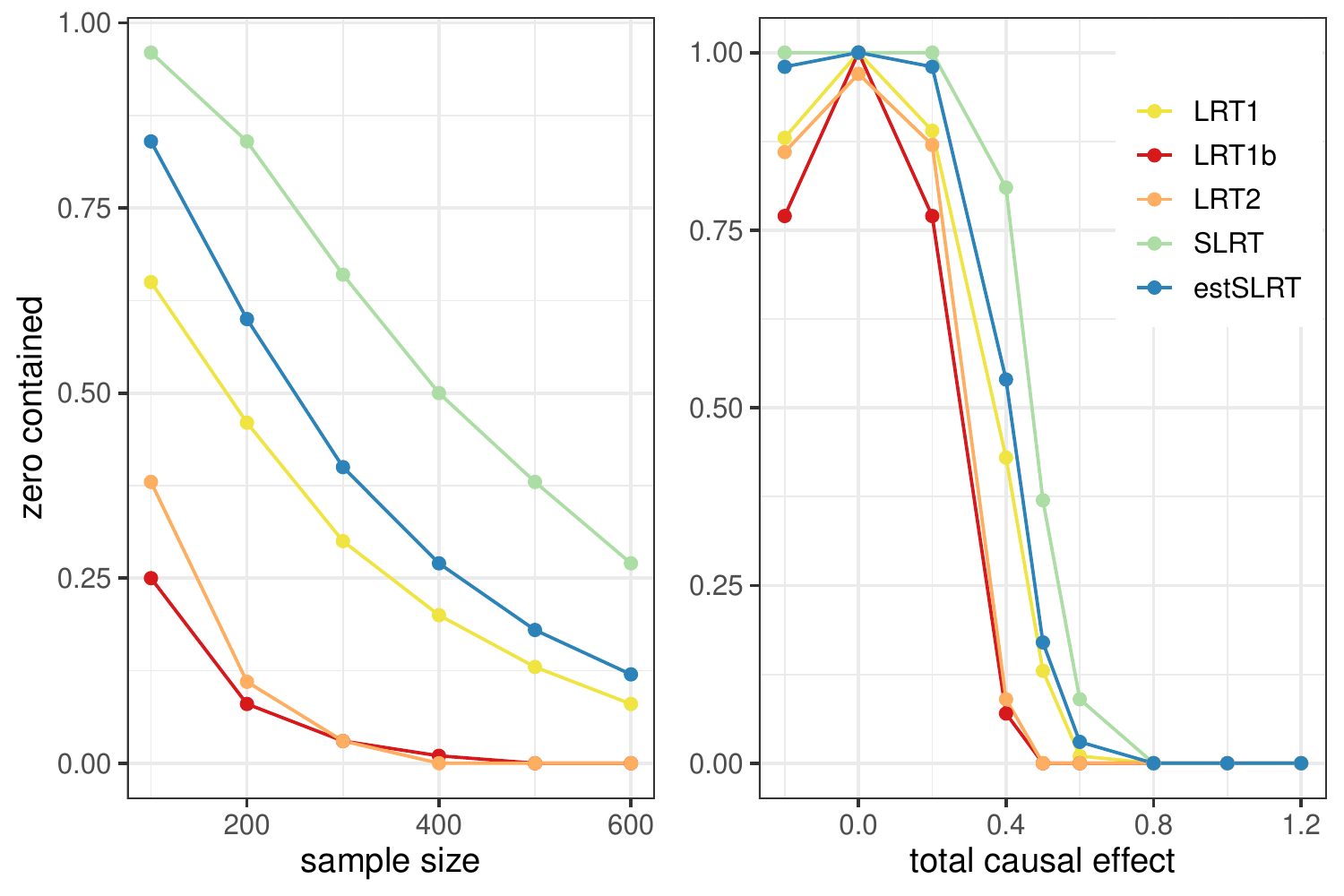}
  \caption{Percentage of times zero contained in the 95\%-confidence intervals for the total causal effect of $X_1$ on $X_2$ ($10000$ replications). (Left) $\mathcal{C}(1\rightarrow 2)=0.5$ against sample size. (Right) $n=500$ against size of the total causal effect.}
   \label{fig::zero}
\end{figure}

\subsection{Data Examples}
\label{subsec:data}

For a real world data benchmark we used the cause effect pairs data set presented in \citet{JMLR:Peters}.  It consists of different cause effect data pairs from various fields, for which the true causal direction is determined by domain knowledge.  For the application of homoscedastic LSEMs, we selected the following pairs: \texttt{pair66} and \texttt{pair67} containing daily stock returns, \texttt{pair76} containing the average annual rate of change of population and total dietary consumption, and \texttt{pair89} and \texttt{pair90} which describe the degree of root decomposition in forests and grasslands, respectively. The first three data sets exhibit a causal effect from $X_1$ to $X_2$ while the last two feature a causal effect from $X_2$ to $X_1$. 
Before calculating the confidence intervals for the causal effect of $X_1$ on $X_2$, we centered the data.

The results in Figure~\ref{fig::example} show that method \texttt{LRT2} which performed very well in the simulations produced an empty confidence set for all five data pairs.  In brief, the method is not able to cope with model misspecification (where we may still wish to obtain a confidence interval for a well-defined parameter).\footnote{E.g., one could consider the causal effect associated with the matrix $\Sigma\in\M_r$ such that the normal distribution $\ND(0,\Sigma)$ is closest in KL divergence to the data-generating distribution.}  The method \texttt{estSLRT} similarly suffers from this problem.  Indeed, both \texttt{LRT2} and \texttt{estSLRT} contrasts a linear and homoscedastic null hypothesis against a general Gaussian alternative.  Even for normal data, departures from homoscedasticity may favor the alternative in all testing problems that correspond to the confidence set, which then remains empty.

Although it was not the most statistically efficient method in our simulations, the method \texttt{LRT1} performs best for the real world data.  As the union of all its tested null hypotheses coincides with the alternative (the p.d.~cone $\M$), it always produces a nonempty confidence set.  Under misspecification the interval targets the parameter
\[
\tilde{\mathcal{C}}(1\rightarrow 2) =
    \frac{\sot}{\soo} \mathds{1}\{\Sigma_{11}\le \Sigma_{22}\};
\]
an extension of the parameter defined in \eqref{ce-12}.  In this sense the method is less sensitive to departures from homoscedasticity, or even linearity if the true covariance matrix $\Sigma$ is defined as furnishing the KL-best normal approximation to the data-generating distribution.  The interval width for \texttt{LRT1} is here similar or even smaller than for the heuristic \texttt{LRT1b}.

The standard split likelihood ratio method \texttt{SLRT} also performs well, but produces considerably wider intervals than \texttt{LRT}.  The width of the estimated confidence intervals with the split likelihood ratio methods slightly vary depending on how the real data set is (randomly) split.

Finally, we note that all proposed methods recognize that there is no causal effect from $X_1$ to $X_2$ in the last two data pairs.  For the first data pair, some uncertainty remains at the 95\%-level  about whether an effect is indeed present (\texttt{LRT1} and \texttt{SLRT}). For comparison, we also calculated the confidence intervals for the causal effect of $X_2$ on $X_1$, see Figure~\ref{fig::example::swap}, and the results lead to the same conclusions.

\newpage

\begin{figure}
  \centering
  \includegraphics[width=\linewidth]{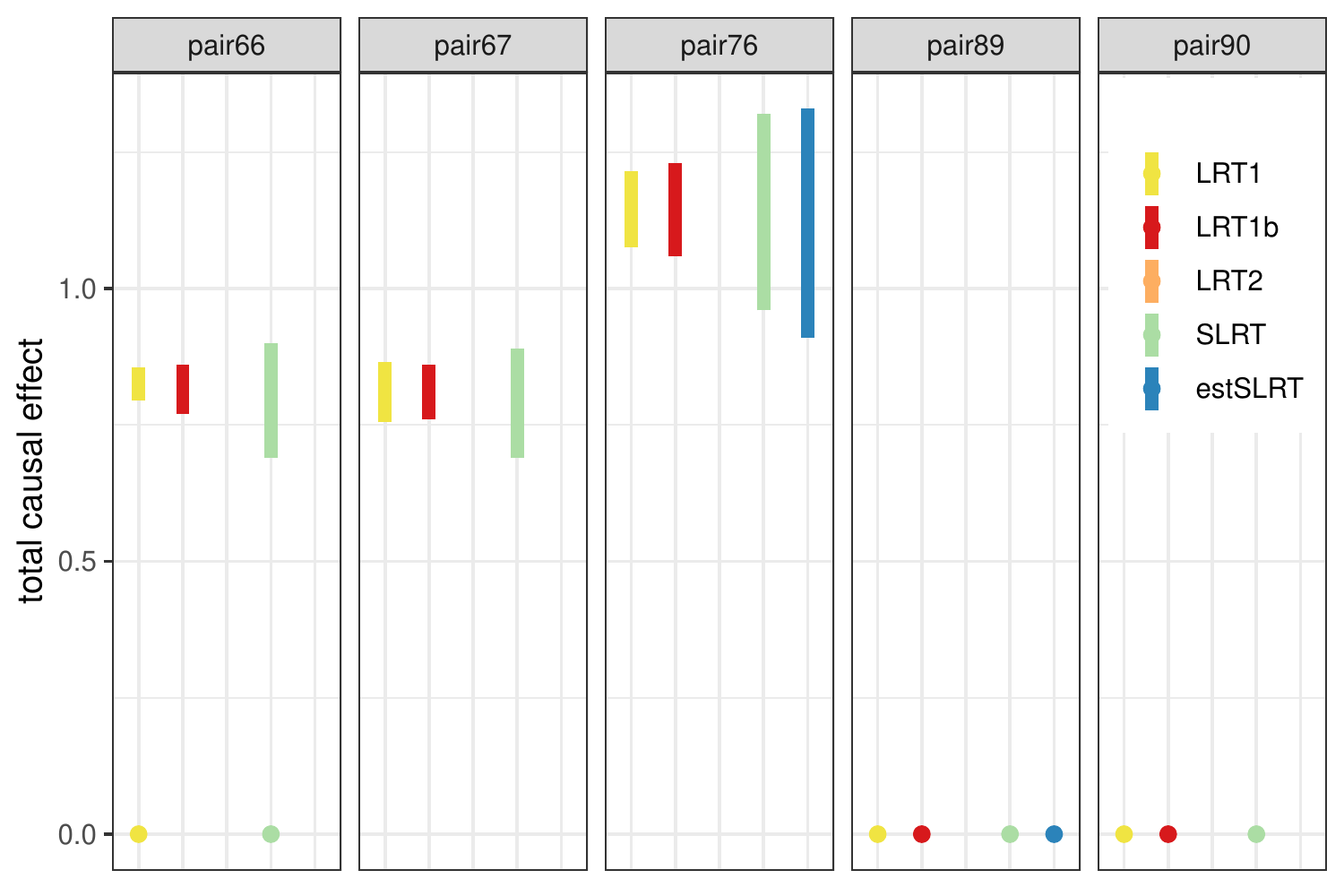}
  \caption{95\%-Confidence intervals for the total causal effect of $X_1$ on $X_2$  for different real world data pairs.}\label{fig::example}
\end{figure}

\begin{figure}
  \centering
  \includegraphics[width=\linewidth]{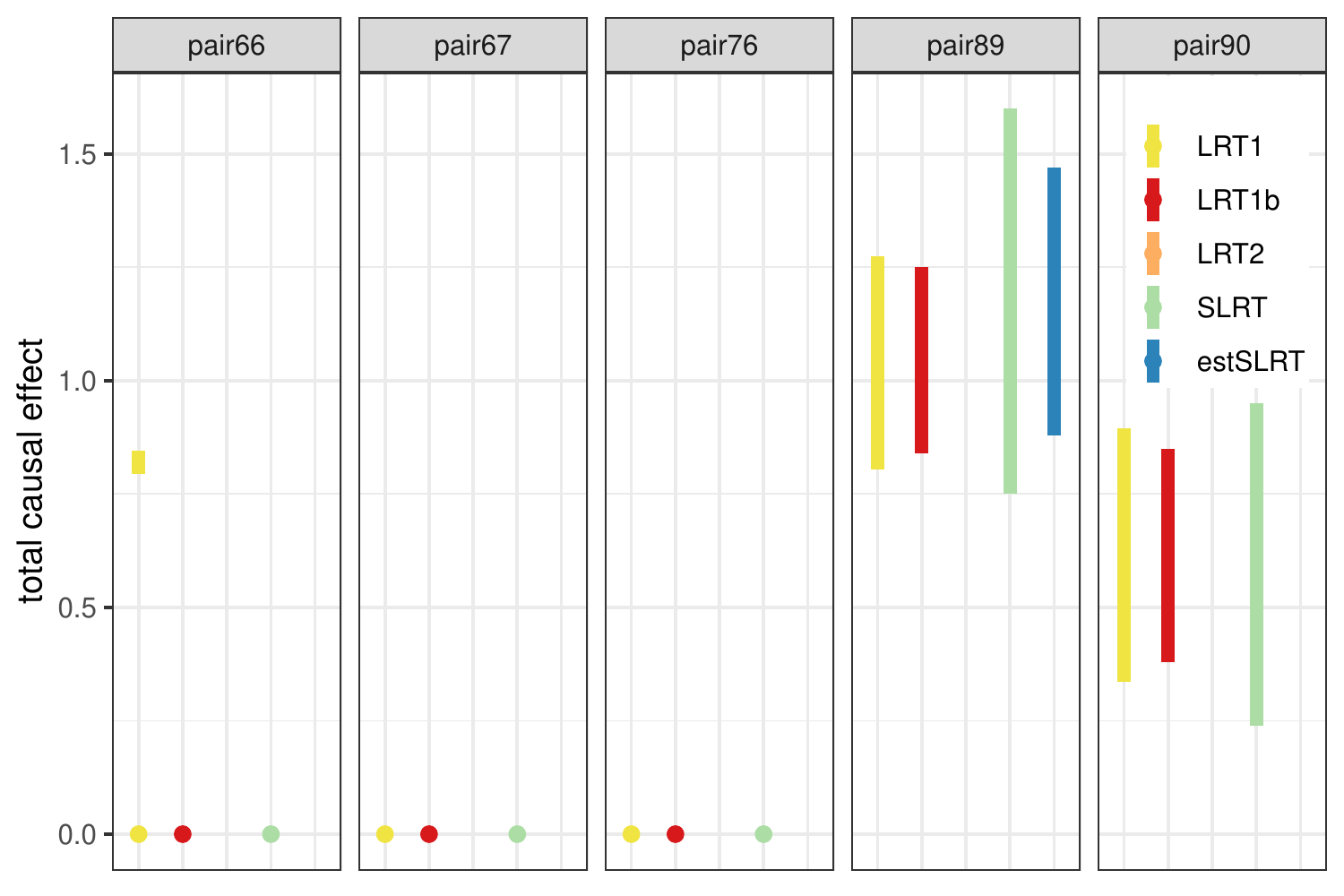}
  \caption{95\%-Confidence intervals for the total causal effect of $X_2$ on $X_1$  for different real world data pairs.}\label{fig::example::swap}
\end{figure}

\section{Higher Dimensions}
\label{sec:higher-dim}

In this section we give an outlook on how one may extend the proposed methods to higher dimensional cases.  The asymptotic distribution of the first  method \texttt{LRT1} via testing inequalities for the conditional variances is a mixture of chi-square distributions. The calculation of those mixture weights is difficult in higher dimensions.  Using data-dependent critical values as used in \cite{almohamad:2020} may be an option to push the methodology to moderately small dimension, but at this point we have not yet explored this option.

Calculating asymptotic distributions for the second method \texttt{LRT2} remains feasible in any individual LSEM that allows for a given effect $\mathcal{C}(i\to j)$ to be nonzero.  However, one has to then address the issue that several LSEM allow for $\mathcal{C}(i\to j)\not=0$ and the relevant null hypothesis becomes a union of smooth manifolds.  One way to address this problem would be to form an intersection union test, but this is again a topic for future work.

The approach that is the simplest for extension to higher dimensional cases are the split likelihood ratio tests.  We illustrate this in the following simulations for the case of $d=3$ variables. The three dimensional case allows for six possible models based on the ordering of the three involved variables $X_1, X_2$ and $X_3$. Figure~\ref{fig::width:high} displays the average maximum width and empirical coverage of the estimated confidence intervals for the total causal effect of $X_1$ on $X_2$ in the six different models. In the upper three cases there is a true causal effect of size $0.5$, while in the other cases there is no causal effect of $X_1$ on $X_2$. The constructed confidence intervals have a high coverage, exceeding the desired coverage with the method \texttt{SLRT} in all cases and with the heuristic method \texttt{estSLRT} in five of the six possible cases.  

\begin{figure}
  \centering
  \includegraphics[width=\linewidth]{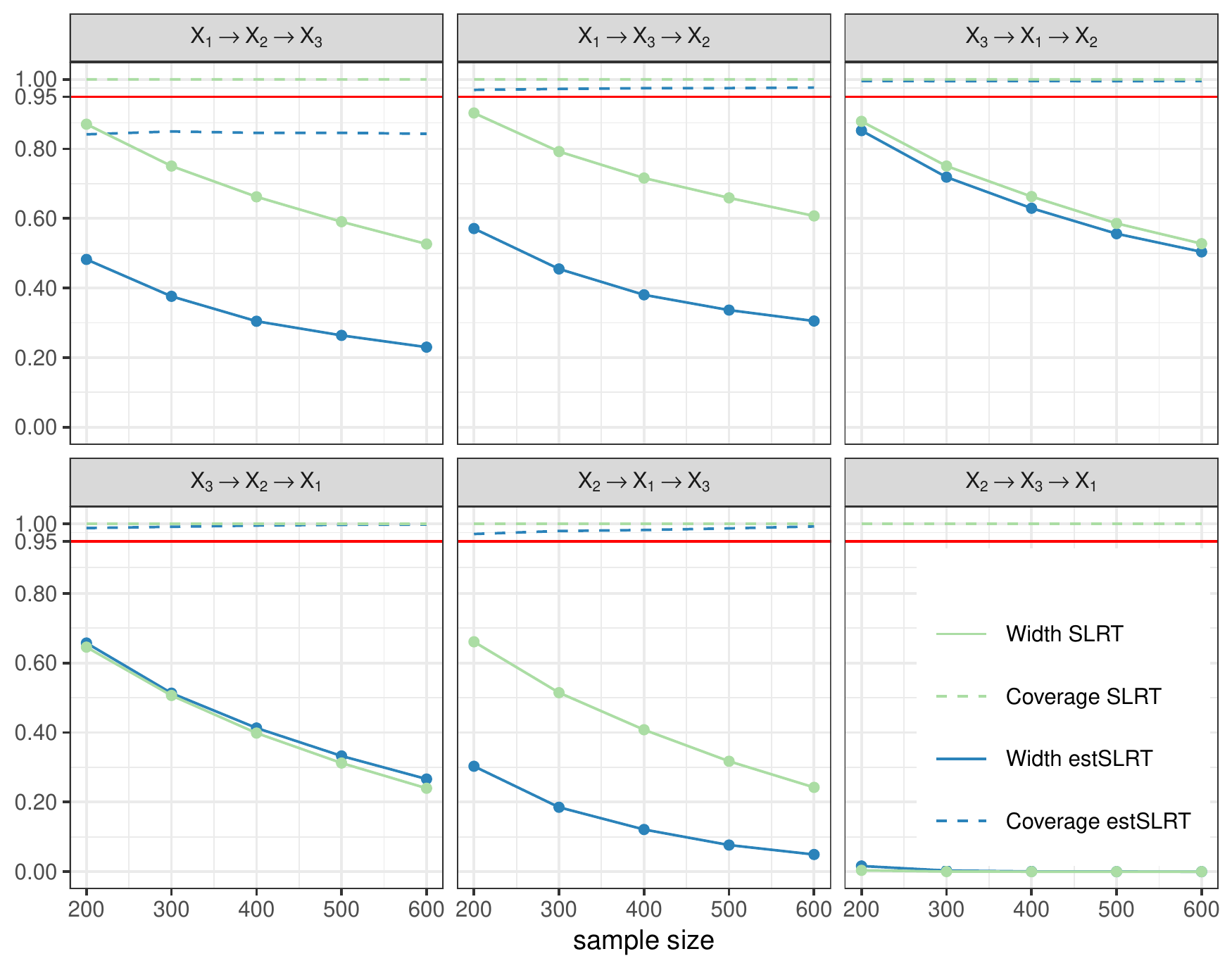}
  \caption{Empirical coverage and average maximum width of the 95\%-confidence interval for the total causal effect of $X_1$ on $X_2$ ($10000$ replications).}\label{fig::width:high}
\end{figure}

\section{Conclusion} 
\label{sec:discussion}

We proposed new methods to construct confidence intervals for the total causal effect in problems in which causal structure is unknown but identifiable.   We cope with this uncertainty in a test inversion approach that accounts for both types of uncertainty: causal structure and numerical size of nonzero effects.  For two-variable problems the empirical results for the \texttt{LRT1}  method that tests inequalities among variances are very promising, but it may prove difficult to extend this method to higher dimensional cases and settings other than homoscedastic LSEMs.  The second proposed \texttt{LRT2} method tests the polynomial constraints imposed by the LSEM assumption and can be extended to higher dimensional cases.  However, it can be  sensitive to departures from the modeling assumptions, as seen in our real data example.  This is due to the fact that the precise model assumptions were incorporated in the null hypotheses, but for simplicity in distribution theory not in the alternative.  An interesting problem for further research would be to improve our understanding of possible asymptotic approximations for the \texttt{LRT2} statistics when the alternative is not relaxed but kept as the union of all homoscedastic LSEMs.

\begin{figure}
  \centering
  \includegraphics[width=\linewidth]{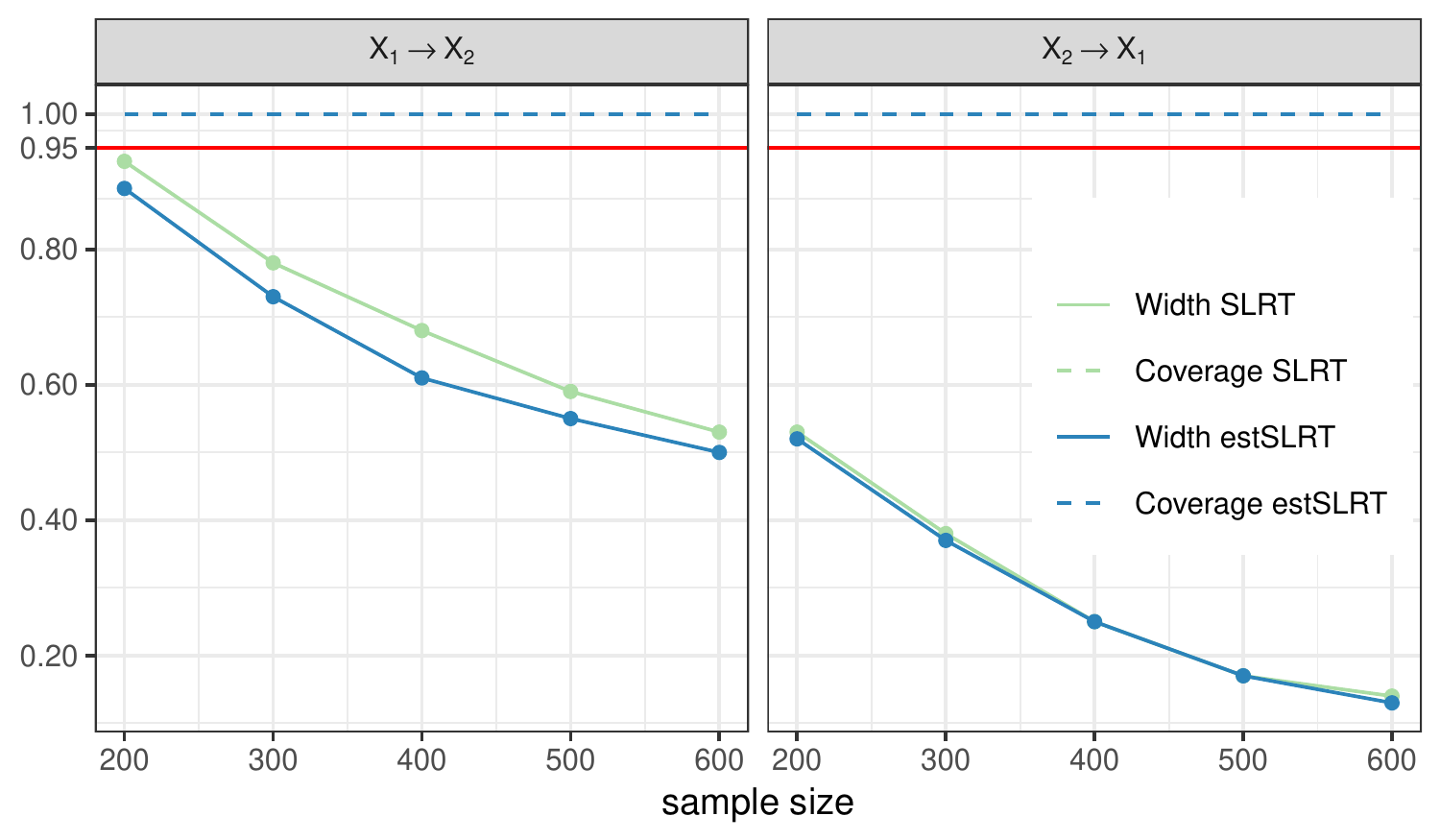}
  \caption{Empirical coverage and average maximum width of the 95\%-confidence interval for the total causal effect of $X_1$ on $X_2$ in a LiNGAM setting ($100$ replications).}\label{fig::width:lingam}
\end{figure}

The last proposed \texttt{SLRT} methods based on the theory of universal inference are the most conservative but also easiest to apply methods.  They can be extended rather directly not only to higher dimensional cases but also to other modeling frameworks.  To illustrate the latter point, we briefly consider the usage of the split likelihood ratio methods for LSEMs with non-Gaussian errors (LiNGAM). \citet{Shimizu} showed that under these assumptions unique identification is possible and the causal structure imposes constraints on the (conditional) moments, see \citet{DrtonLingam}. We can thus use empirical likelihood methods \citep{wang:2017} to form a split likelihood ratio and construct confidence intervals for the causal effect. Figure \ref{fig::width:lingam} shows the empirical coverage and average maximum width of these confidence intervals for the causal effect of $X_1$ on $X_2$ in the bivariate LiNGAM setting. We simulated the data with a causal effect of size $0.5$ (in different directions) and uniform$(-1,1)$ distributed error terms.  We observe that the method is conservative yet informative.

\begin{acknowledgements} 
This project has received funding from the European Research Council (ERC) under the European Union’s Horizon 2020 research and innovation programme (grant agreement No 883818).
\end{acknowledgements}

\bibliography{strieder_473}
\end{document}